\newcommand*{\mailto}[1]{\href{mailto:#1}{\nolinkurl{#1}}}
\newtheorem{theorem}{Theorem}[section]
\newtheorem{lemma}[theorem]{Lemma}
\newtheorem{corollary}[theorem]{Corollary}
\newcommand{\R}{{\mathbb R}}
\newcommand{\N}{{\mathbb N}}
\newcommand{\C}{{\mathbb C}}
\newcommand{\spr}[2]{\langle #1 , #2 \rangle}
\newcommand{\dbspr}[2]{[ #1 , #2 ]}
\newcommand{\E}{\mathrm{e}}
\newcommand{\I}{\mathrm{i}}
\newcommand{\LdBsm}{L^2(\R;\rho)}
\newcommand{\oo}{o}
\numberwithin{equation}{section}
\begin{document}

\title[Inverse uniqueness results using de Branges theory]{Inverse uniqueness results for Schr\"{o}dinger operators using~de~Branges theory}

\author[J.\ Eckhardt]{Jonathan Eckhardt}
\address{Faculty of Mathematics\\ University of Vienna\\
Nordbergstrasse 15\\ 1090 Wien\\ Austria}
\email{\mailto{jonathan.eckhardt@univie.ac.at}}
\urladdr{\url{http://homepage.univie.ac.at/jonathan.eckhardt/}}

\thanks{\href{http://dx.doi.org/10.1007/s11785-012-0265-3}{Complex Anal.\ Oper.\ Theory {\bf 8} (2014), no.~1, 37--50}}
\thanks{{\it Research supported by the Austrian Science Fund (FWF) under Grant No.\ Y330}}

\keywords{Schr\"odinger operators, de Branges spaces, strongly singular potentials}
\subjclass[2010]{Primary 34L05, 46E22; Secondary 34L40, 34B24.}

\begin{abstract}
We utilize the theory of de Branges spaces to show when certain Schr\"{o}dinger operators with strongly singular potentials are uniquely determined by their associated spectral measure.
 The results are applied to obtain an inverse uniqueness theorem for perturbed spherical Schr\"odinger operators.
\end{abstract}

\maketitle

\section{Introduction}

We consider Schr\"{o}dinger operators $H$ (with separated boundary conditions), associated with the differential expression
\begin{align*}
 \tau = -\frac{d^2}{dx^2} + q(x)
\end{align*}
on some interval $(a,b)$, where $q\in L^1_{\mathrm{loc}}(a,b)$ is a real-valued potential. 
 It has been shown by Kodaira \cite{ko}, Kac \cite{ka} and more recently by Fulton \cite{ful2}, Gesztesy and Zinchenko \cite{geszin}, Fulton and Langer \cite{fullan}, Kurasov and Luger \cite{kurlug}, and Kostenko, Sakhnovich and Teschl \cite{kt}, \cite{kst}, \cite{kst2} that, even when the potential is quite singular at $a$, it is still possible to introduce a singular Weyl--Titchmarsh function as well as a scalar spectral measure. Indeed, this only requires some nontrivial real entire solution $\phi$ of
\begin{align*}
 -\phi''(z,x) + q(x)\phi(z,x) = z\phi(z,x), \quad x\in(a,b),~z\in\C,
\end{align*}
which lies in $L^2(a,b)$ near $a$ and satisfies the boundary condition at $a$ if $\tau$ is in the limit-circle case there. Here, by a real entire solution we mean that the functions 
\begin{align*}
 z\mapsto\phi(z,c) \quad\text{and}\quad z\mapsto\phi'(z,c)
\end{align*}
 are real entire for one (and hence for all) $c\in(a,b)$.
 For example, if $\tau$ is in the limit-circle case at $a$, then such a solution is known to exist and Weyl--Titchmarsh theory has been developed e.g.\ in~\cite{ful}, \cite{beneve}, analogously to the regular case. 
 In general, for such a solution $\phi$ to exist it is necessary and sufficient that the operator $H_c$ has purely discrete spectrum (see e.g.\ \cite[Lemma~3.2]{geszin}, \cite[Lemma~2.2]{kst2}) for some $c\in(a,b)$, where $H_c$ is the restriction of $H$ to $L^2(a,c)$ with some additional self-adjoint boundary condition at the point $c$.

Regarding inverse spectral theory, Kostenko, Sakhnovich and Teschl~\cite{kst2} were able to prove a local Borg--Marchenko uniqueness result for the singular Weyl--Titchmarsh function under restrictions on the exponential growth of solutions. 
 Their proof follows the simple proof of Bennewitz~\cite{ben}, which covers the case of regular left endpoints.
However, since the spectral measure determines the singular Weyl--Titchmarsh function only up to some real entire function,
 their Borg--Marchenko theorem does not immediately yield an inverse uniqueness result for the associated spectral measure.
 In fact, all one would need is some growth restriction on the difference of two singular Weyl--Titchmarsh functions, corresponding to the same spectral measure.  This has been done in~\cite{SingWT2}, for the case when the spectra of the operators are assumed to be purely discrete with finite convergence exponent. 
 
The present paper pursues a different approach. 
 We utilize de Branges' theory of Hilbert spaces of entire functions in order to obtain an inverse uniqueness theorem for the spectral measure.
 More precisely, we apply de Branges' subspace ordering theorem to conclude that the de Branges spaces associated with Schr\"{o}dinger operators with a common spectral measure are equal.
 Therefore, we will first provide a brief review of the theory of de Branges spaces in Section~\ref{secdB}.
For a detailed discussion we refer to de Branges' book~\cite{dBbook}.
The following section introduces de Branges spaces associated with a self-adjoint Schr\"{o}dinger operator as above. 
The core of this section is quite similar to~\cite[Section~3]{remling} (see also~\cite{remling2}) with the only difference that we do not assume the left endpoint to be regular.
Section~\ref{secdBuniq} is devoted to our inverse uniqueness theorem for the spectral measure. 
Finally,  we apply our results to perturbed spherical Schr\"{o}dinger (or Bessel) operators. 

As a last remark, let us mention that the approach taken here equally well applies to more general operators. 
 For example consider the differential expression 
\begin{align*}
  - \frac{d}{dx} \frac{d}{d\varsigma(x)} + \chi(x)
\end{align*}
 on some interval $(a,b)$, where $\varsigma$ and $\chi$ are real-valued Borel measures on $(a,b)$ which do not have common point masses, that is, $\varsigma(\lbrace x\rbrace) \chi(\lbrace x\rbrace) = 0$ for all $x\in(a,b)$. 
 Moreover, we suppose that the absolutely continuous part of $\varsigma$ is Lebesgue measure.  
 This differential expression (together with possible separated boundary conditions) gives rise to a unique self-adjoint operator in $L^2(a,b)$ (see \cite{measureSL} for details or \cite{ka}, \cite{bamrem} for the case when $\varsigma$ is Lebesgue measure). 
  For example, this kind of operators include Schr\"odinger operators with local $\delta$ and $\delta'$ point interactions on discrete sets as well as ones with (non-local) point interactions on sets of Lebesgue measure zero  as studied recently in \cite{albniz}, \cite{albniz2}, \cite{braniz}.  
 Section~\ref{secSdB} literally holds for these more general differential expressions as well (see \cite{measureSL} for the spectral theory and \cite[Corollary~6.2]{ben2} for the necessary high energy asymptotics). Moreover, the inverse uniqueness results in Section~\ref{secdBuniq} are literally the same ($\varsigma$ and $\chi$ are both determined up to some shift) with some minor modifications in the second half of the proof of Theorem~\ref{thmdBuniqS}. 
 Finally, note that our approach also applies to general self-adjoint Sturm--Liouville operators associated with differential expressions of the form 
 \begin{align*}
  - \frac{1}{r(x)}\frac{d}{dx} \frac{1}{s(x)} \frac{d}{dx} + \frac{q(x)}{r(x)}
 \end{align*}
 with three coefficient functions on some interval $(a,b)$. 
 In that case, the associated operators are determined by the spectral measure only up to a so-called Liouville transform as shown in \cite{ben1} (see also \cite{LeftDefiniteSL}).

\section{Hilbert spaces of entire functions}\label{secdB}

First of all, recall that an analytic function $N$ in the open upper complex half-plane $\C^+$ is said to be of bounded type if it can be written as the quotient of two bounded analytic functions.
For such a function, the quantity
\begin{align*}
 \limsup_{y\rightarrow\infty} \frac{\ln|N(\I y)|}{y} \in[-\infty,\infty)
\end{align*}
is referred to as the mean type of $N$.

A de Branges function is an entire function $E$, which satisfies the estimate
\begin{align*}
 |E(z)| > |E(z^\ast)|, \quad z\in\C^+.
\end{align*}
The de Branges space $B$ associated with such a function consists of all entire functions $F$ such that
\begin{align*}
 \int_\R \frac{|F(\lambda)|^2}{|E(\lambda)|^2} d\lambda < \infty
\end{align*}
and such that $F/E$ and $F^\#/E$ are of bounded type in $\C^+$ with non-positive mean type.
Here $F^\#$ is the entire function given by
\begin{align*}
 F^\#(z) = F(z^\ast)^\ast, \quad z\in\C.
\end{align*}
Equipped with the inner product
\begin{align*}
 \dbspr{F}{G} = \frac{1}{\pi} \int_\R \frac{F(\lambda)G(\lambda)^\ast}{|E(\lambda)|^2} d\lambda, \quad F,\,G\in B,
\end{align*}
 the vector space $B$ turns into a Hilbert space (see~\cite[Theorem~21]{dBbook}).
 For each $\zeta\in\C$, the point evaluation in $\zeta$ is a continuous linear functional on $B$, that is,
 \begin{align*}
  F(\zeta) = \dbspr{F}{K(\zeta,\cdot\,)}, \quad F\in B,
 \end{align*}
 where the reproducing kernel $K$ is given by (see~\cite[Theorem~19]{dBbook})
 \begin{align}\label{eqndBrepker}
  K(\zeta,z) = \frac{E(z)E^\#(\zeta^\ast)-E(\zeta^\ast)E^\#(z)}{2\I (\zeta^\ast-z)}, \quad \zeta,\,z\in\C.
 \end{align}
 Hereby note that though there is a multitude of de Branges functions giving rise to the same de Branges space (including norms), the reproducing kernel $K$ is independent of the actual de Branges function. 
 
Our uniqueness result relies on the subspace ordering theorem due to de Branges;\ \cite[Theorem~35]{dBbook}. 
 In order to state it, let $E_1$, $E_2$ be two de Branges functions and $B_1$, $B_2$ be the corresponding de Branges spaces.

\begin{theorem}\label{thmdBOrdering}
Suppose $B_1$, $B_2$ are isometrically embedded in $\LdBsm$ for some Borel measure $\rho$ on $\R$. 
If $E_1/E_2$ is of bounded type in the upper com\-plex half-plane and has no real zeros or singularities, then $B_1$ contains $B_2$ or $B_2$ contains $B_1$.
\end{theorem}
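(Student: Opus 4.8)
The plan is to follow de Branges: reduce the statement to the special case in which the ambient space is itself a de Branges space, where the conclusion reflects the rigidity forced by the division axiom. Two preliminary remarks will be used repeatedly. First, since $B_1$ and $B_2$ are isometrically embedded in $\LdBsm$, the de Branges norm of any $F\in B_i$ equals $\int_\R|F(\lambda)|^2\,d\rho(\lambda)$; hence the $\LdBsm$-norm is a single norm carried by both $B_1$ and $B_2$, and any de Branges space that is contained \emph{as a set} in $B_1$ (or in $B_2$) is automatically contained isometrically. Second, for an entire function $F$ one has $|F^\#(\lambda)|=|F(\lambda)|$ for $\lambda\in\R$, so $F\mapsto F^\#$ is an $\LdBsm$-isometry. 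We may assume $B_1\neq\{0\}$ and $B_2\neq\{0\}$, the other case being immediate.

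The first real step is to form $B_+$, the closure of the algebraic sum $B_1+B_2$ in $\LdBsm$, and to prove that $B_+=B(E_+)$ for some de Branges function $E_+$. For this one checks the axiomatic description of de Branges spaces: $B_+$ is a Hilbert space (a closed subspace of $\LdBsm$); it is invariant under $F\mapsto F^\#$ with preserved norm (inheriting the invariance from $B_1$, $B_2$ and using the second remark above); and it has the division property, that is, $F\in B_+$ with $F(w)=0$ implies $\frac{z-w^\ast}{z-w}F(z)\in B_+$ with unchanged norm. The remaining and decisive point is that the point evaluation $F\mapsto F(w)$, defined on the entire functions in $B_1+B_2$, is bounded by a constant multiple of the $\LdBsm$-norm for every $w\in\C$, so that it extends continuously to $B_+$ and the elements of $B_+$ are restrictions of entire functions. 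This is exactly where the hypothesis that $E_1/E_2$ is of bounded type in $\C^+$ with no real zeros or singularities enters: it must be turned into a quantitative comparison of $E_1$ and $E_2$ — equivalently of their reproducing kernels $K_1$, $K_2$ on $\R$ through \eqref{eqndBrepker}, or of their phase functions — strong enough to control the cancellation possible in a representation $F=F_1+F_2$ and hence to bound $|F(w)|$ in terms of $\|F\|_{\LdBsm}$; the same comparison is what makes the division property for $B_+$ tractable via the reproducing-kernel identity instead of summand by summand.

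With $B_+=B(E_+)$ established, $B_1$ and $B_2$ are de Branges spaces isometrically contained in the single de Branges space $B(E_+)$, and the proof concludes by invoking de Branges' ordering of such subspaces, \cite[Theorem~33]{dBbook}. The mechanism there is that for an isometric inclusion $B(E_0)\subseteq B(E_+)$ the quotient $E_+/E_0$ is of bounded type of nonnegative mean type with no real singularities, the orthogonal complement $B(E_+)\ominus B(E_0)$ carries a nonnegative reproducing kernel, and so $K_{E_+}(w,w)-K_{E_0}(w,w)\geq 0$; applying this with $E_0=E_1$ and with $E_0=E_2$, and using the division axiom to compare the two complements, forces $B(E_1)$ and $B(E_2)$ into one totally ordered chain, so that one contains the other. (This is de Branges' own route: Theorem~35 reduces to Theorem~33 precisely through the sum construction.)

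The step I expect to be the genuine obstacle is the claim that $B_+$ is again a de Branges space, concretely the continuity of point evaluation on $B_1+B_2$. Two closed subspaces of a Hilbert space that share a common subspace need not be nested, so the argument cannot be soft: it has to extract from ``$E_1/E_2$ of bounded type with no real zeros or singularities'' the precise rigidity that keeps the algebraic sum of the two reproducing-kernel spaces inside the class of entire functions with controlled evaluations. The remaining pieces — the common-norm remark, the bookkeeping of mean types, and the final appeal to the ordering of subspaces of a fixed de Branges space — are comparatively routine.
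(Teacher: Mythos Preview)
The paper does not prove this theorem at all: it is stated without proof and attributed directly to de Branges, \cite[Theorem~35]{dBbook}. So there is no ``paper's own proof'' to compare against; the statement is quoted as a black box on which the rest of the paper rests.

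Your proposal is therefore not a comparison candidate but an attempt to reproduce de Branges' original argument. As an outline it is in the right spirit --- Theorem~35 in \cite{dBbook} is indeed deduced from the ordering of de Branges subspaces of a fixed de Branges space --- and you correctly identify the crux: one must manufacture an ambient de Branges space containing both $B_1$ and $B_2$ isometrically, and the bounded-type hypothesis on $E_1/E_2$ is exactly what makes this possible. However, your write-up is a plan rather than a proof: you explicitly flag the continuity of point evaluation on $B_1+B_2$ (equivalently, that the closure consists of entire functions with a reproducing kernel) as the ``genuine obstacle'' and do not carry it out. That step is not a formality; de Branges' argument at this point is delicate and occupies several pages, passing through the structure theory of Theorems~33--34 and a careful analysis of phase functions. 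Absent that work, what you have is an accurate roadmap but not a proof. For the purposes of this paper, simply citing \cite[Theorem~35]{dBbook} --- as the author does --- is the appropriate move.
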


Moreover, one has the following simple converse statement.

\begin{lemma}\label{lemdBordcon}
If $B_1$ contains $B_2$ or $B_2$ contains $B_1$, then $E_1/E_2$ is of bounded type in the upper complex half-plane.
\end{lemma}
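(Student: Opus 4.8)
The plan is to exploit the reproducing kernels of the two spaces. First I would reduce to a single inclusion: since a meromorphic function on $\C^+$ that does not vanish identically is of bounded type precisely when its reciprocal is, the assertion is symmetric under interchanging the roles of $1$ and $2$, so it suffices to treat the case in which $B_1$ is contained in $B_2$.

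Now fix an arbitrary point $\zeta\in\C^+$ and consider the reproducing kernel $K_1(\zeta,\cdot)$ of $B_1$. The crucial point is that this one function lies in \emph{both} spaces: it belongs to $B_1$ by definition of the reproducing kernel, and hence to $B_2$ by the assumed inclusion. Membership in $B_1$ forces $K_1(\zeta,\cdot)/E_1$ to be of bounded type in $\C^+$, and membership in $B_2$ forces $K_1(\zeta,\cdot)/E_2$ to be of bounded type in $\C^+$. Writing
\begin{align*}
 \frac{E_1(z)}{E_2(z)} = \frac{K_1(\zeta,z)\big/E_2(z)}{K_1(\zeta,z)\big/E_1(z)}
\end{align*}
then exhibits $E_1/E_2$ as a quotient of two functions of bounded type, which is again of bounded type --- as soon as one knows that the denominator is not the zero function, that is, that $K_1(\zeta,\cdot)\not\equiv0$ (and that $E_1,E_2\not\equiv0$, which is immediate from the strict defining inequality for de Branges functions).

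The remaining step is to verify $K_1(\zeta,\cdot)\not\equiv0$, for which it suffices to check that $K_1(\zeta,\zeta)>0$, since by the reproducing property $K_1(\zeta,\zeta)=\dbspr{K_1(\zeta,\cdot)}{K_1(\zeta,\cdot)}$ is the squared norm of $K_1(\zeta,\cdot)$ in $B_1$. I would compute this diagonal value from the explicit formula~\eqref{eqndBrepker}: using $E_1^\#(\zeta^\ast)=E_1(\zeta)^\ast$, $E_1^\#(\zeta)=E_1(\zeta^\ast)^\ast$ and $2\I(\zeta^\ast-\zeta)=4\,\im\zeta$, one obtains
\begin{align*}
 K_1(\zeta,\zeta) = \frac{|E_1(\zeta)|^2 - |E_1(\zeta^\ast)|^2}{4\,\im\zeta},
\end{align*}
which is strictly positive for $\zeta\in\C^+$ by the inequality $|E_1(\zeta)|>|E_1(\zeta^\ast)|$ built into the definition of a de Branges function. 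This closes the argument.

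I do not anticipate a genuine obstacle: the only thing demanding (routine) care is the stability of the class of functions of bounded type under quotients and reciprocals, which follows straight from the definition as ratios of bounded analytic functions. It is perhaps worth remarking that the proof uses only set-theoretic containment of the spaces (not a priori continuity of the embedding) and does not even invoke the mean-type condition in the definition of a de Branges space.
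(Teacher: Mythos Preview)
Your argument is correct, but it differs from the paper's. The paper invokes \cite[Theorem~25]{dBbook} to decompose the de~Branges function itself as $E_1(z)=F_1(z)+zG_1(z)$ with $F_1,G_1\in B_1\subseteq B_2$, and then writes $E_1/E_2$ as a \emph{sum} of two bounded-type functions. You instead pick a single nonzero element of $B_1$ (the reproducing kernel $K_1(\zeta,\cdot)$ at a point $\zeta\in\C^+$) and write $E_1/E_2$ as a \emph{quotient} of the two bounded-type functions $K_1(\zeta,\cdot)/E_2$ and $K_1(\zeta,\cdot)/E_1$. Your route is slightly more elementary in that it avoids the external citation to Theorem~25 and uses only the explicit kernel formula~\eqref{eqndBrepker} already recalled in the paper; the positivity check $K_1(\zeta,\zeta)=(|E_1(\zeta)|^2-|E_1(\zeta^\ast)|^2)/(4\,\im\zeta)>0$ is exactly what is needed to ensure the denominator is not identically zero. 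The paper's route, on the other hand, does not require verifying nonvanishing of an auxiliary function and makes transparent that the class of bounded-type functions is stable under sums and multiplication by $z$. Both arguments use only set-theoretic containment $B_1\subseteq B_2$, as you note.
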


\begin{proof}
 For each $F\in B_1\cap B_2$ the quotients $F/E_1$ and $F/E_2$ are of bounded type in the upper complex half-plane by definition, which implies the claim.
\end{proof}

\section{Schr\"{o}dinger operators with strongly singular potentials}\label{secSdB}

In this section let $(a,b)$ be some bounded or unbounded interval, $q$ be a real-valued, locally integrable function on $(a,b)$ and $\tau$ be the differential expression
\begin{align*}
 \tau = -\frac{d^2}{dx^2} + q(x)
\end{align*} 
on $(a,b)$.
With $H$ we denote some associated self-adjoint Schr\"{o}dinger operator in $L^2(a,b)$ with separated boundary conditions (if $\tau$ is in the limit-circle case at both endpoints).
Concerning the regularity of $\tau$ near the endpoint $a$, we will only assume that there is some real entire solution $\phi$ of
\begin{align*}
 -\phi''(z,x) + q(x)\phi(z,x) = z\phi(z,x), \quad x\in(a,b),~z\in\C,
\end{align*}
such that for each $z\in\C$, $\phi(z,\cdot\,)$ is not identically zero, lies in $L^2(a,b)$ near $a$ and satisfies the boundary condition at $a$ if $\tau$ is in the limit-circle case there. 
Here, by real entire we mean that for some (and hence for all) $c\in(a,b)$ the functions 
\begin{align*}
z\mapsto\phi(z,c) \quad\text{and}\quad z\mapsto\phi'(z,c)
\end{align*}
 are real entire.
  For the proof of our inverse uniqueness result we will need the following high energy asymptotics of the solution $\phi$, which may be deduced from the asymptotics in \cite[Section~9.4]{tschroe}.
 Note that we always use the principal square root with branch cut along the negative real axis.

\begin{lemma}\label{lemSchrPhiAsym}
 For every $x$, $\tilde{x}\in(a,b)$ we have the asymptotics
 \begin{align}
   \frac{\phi(z,x)}{\phi(z,\tilde{x})} = \E^{\sqrt{-z}(x-\tilde{x}+\oo(1))}
 \end{align}
 as $|z|\rightarrow\infty$ along the imaginary axis.
\end{lemma}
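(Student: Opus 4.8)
The plan is to reduce the statement to the classical large-$z$ asymptotics of solutions of $-u'' + qu = zu$ on a compact subinterval, where $q$ is merely integrable. Fix $x,\tilde x\in(a,b)$ and, without loss of generality, assume $\tilde x < x$; let $I = [\tilde x, x]$, a compact subinterval of $(a,b)$ on which $q\in L^1$. On $I$, consider the two solutions $c(z,\cdot)$ and $s(z,\cdot)$ of $\tau u = zu$ determined by the initial conditions $c(z,\tilde x)=1$, $c'(z,\tilde x)=0$ and $s(z,\tilde x)=0$, $s'(z,\tilde x)=1$. Writing $k=\sqrt{-z}$ (principal branch, so $\re k\to+\infty$ as $|z|\to\infty$ along the imaginary axis), I would recall the standard Volterra-integral-equation estimates: uniformly for $t\in I$,
\begin{align*}
 c(z,t) &= \cosh\big((t-\tilde x)k\big)\big(1+o(1)\big), \\
 s(z,t) &= \frac{\sinh\big((t-\tilde x)k\big)}{k}\big(1+o(1)\big),
\end{align*}
and similarly for the $t$-derivatives $c'(z,t) = k\sinh((t-\tilde x)k)(1+o(1))$, $s'(z,t)=\cosh((t-\tilde x)k)(1+o(1))$, as $|z|\to\infty$ along the imaginary axis. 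These follow by iterating the integral equation $u(z,t) = u_0(z,t) + \int_{\tilde x}^t \frac{\sinh((t-r)k)}{k} q(r) u(z,r)\,dr$ and using $|\sinh((t-r)k)/k|\le (t-r)e^{(t-r)|\re k|}$ together with $q\in L^1(I)$; this is entirely classical (e.g.\ the perturbed-Bessel literature cited, or Coddington--Levinson) and I would simply invoke it.

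Next I express $\phi$ in this basis: since $\phi(z,\cdot)$ solves the same equation on $I$, we have
\begin{align*}
 \phi(z,t) = \phi(z,\tilde x)\, c(z,t) + \phi'(z,\tilde x)\, s(z,t), \qquad t\in I.
\end{align*}
Dividing by $\phi(z,\tilde x)$ (which is nonzero for every $z\in\C$ by hypothesis) gives
\begin{align*}
 \frac{\phi(z,x)}{\phi(z,\tilde x)} = c(z,x) + \frac{\phi'(z,\tilde x)}{\phi(z,\tilde x)}\, s(z,x).
\end{align*}
The ratio $w(z) := \phi'(z,\tilde x)/\phi(z,\tilde x)$ is, up to a sign and choice of endpoint, the singular Weyl $m$-function (or rather an entire-solution analogue of it), and the key analytic input I need is that $w(\I y)/\sqrt{\I y}\to -1$ as $y\to\infty$ — equivalently $w(z) = -k\,(1+o(1)) = -\sqrt{-z}\,(1+o(1))$ along the imaginary axis. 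Granting this, substitute the asymptotics for $c(z,x)$ and $s(z,x)$:
\begin{align*}
 \frac{\phi(z,x)}{\phi(z,\tilde x)}
 = \cosh\big((x-\tilde x)k\big)\big(1+o(1)\big) - \frac{\sinh\big((x-\tilde x)k\big)}{k}\cdot k\,\big(1+o(1)\big),
\end{align*}
and since $\cosh\xi - \sinh\xi = e^{-\xi}$ with $\xi = (x-\tilde x)k$, and $\re\xi\to+\infty$ so that $e^{-\xi}$ is not the dominant term — here one must be careful: $\cosh\xi$ and $\sinh\xi$ are both of size $e^{\re\xi}$, and their difference is exponentially smaller, so the $o(1)$ error terms, which are relative, would swamp the cancellation. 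This forces a slightly more careful bookkeeping: one keeps the $o(1)$ terms attached multiplicatively to $e^{\xi}$ and $e^{-\xi}$ separately in the integral-equation expansion, i.e. uses the sharper form $\phi(z,t)/\phi(z,\tilde x) = e^{(t-\tilde x)k}(1+o(1)) + e^{-(t-\tilde x)k}\,O(1)$, whence the claim $\phi(z,x)/\phi(z,\tilde x) = e^{(x-\tilde x)k}(1+o(1))$ follows since the second term is $O(e^{-(x-\tilde x)\re k}) = o(e^{(x-\tilde x)k})$.

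The main obstacle is therefore twofold, and both parts live in the cancellation issue just described. First, establishing $w(z)\sim -\sqrt{-z}$: this is itself a Weyl-$m$-function asymptotic, and the clean way to get it is to run the same Volterra analysis for the $L^2$-near-$a$ solution $\phi$ on a \emph{left} neighborhood of $\tilde x$ — or, more efficiently, to avoid $w$ entirely and argue directly that \emph{any} solution that is $L^2$ near $a$ must, after normalization at $\tilde x$, behave like $e^{(t-\tilde x)k}(1+o(1))$ on $I$, because the complementary solution (behaving like $e^{-(t-\tilde x)k}$) grows at $a$ and hence cannot be the $L^2$ one; one extracts this by writing $\phi$ on an interval $(a',\tilde x)$ with $a<a'<\tilde x$ via the decaying Volterra solution normalized at $a'$ and tracking which exponential survives. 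Second, the bookkeeping of relative versus absolute error terms across the cancellation $\cosh - \sinh$; the honest fix is to carry the asymptotic expansion in the form "dominant exponential times $(1+o(1))$ plus subdominant exponential times $O(1)$" from the outset, rather than stating a single multiplicative $(1+o(1))$ for $c$ and $s$ individually. With that refinement in place the computation collapses to the identity $\cosh\xi - \sinh\xi = e^{-\xi}$ — wait, rather to the fact that the $e^{+\xi}$ coefficients combine to $1+o(1)$ — and the lemma follows. I would present the argument in the second, $w$-free form, since it keeps everything inside a single integral-equation estimate on $[a',x]$.
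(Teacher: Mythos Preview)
Your approach is the paper's: introduce the fundamental system $c,s$ at $\tilde{x}$, write
\[
  \frac{\phi(z,x)}{\phi(z,\tilde{x})} = c(z,x) + w(z)\,s(z,x), \qquad w(z)=\frac{\phi'(z,\tilde{x})}{\phi(z,\tilde{x})},
\]
and feed in the standard asymptotics of $c,s$ together with those of $w$. The paper does exactly this and simply cites \cite[Lemma~9.18]{tschroe} for $c,s$ and \cite[Lemma~9.19]{tschroe} for $w$.

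Your difficulty is self-inflicted: you have the sign of $w$ wrong. Since $\phi(z,\cdot)$ is the solution distinguished at the \emph{left} endpoint $a$ (square integrable there, satisfying the boundary condition), its logarithmic derivative at $\tilde{x}$ is the left Weyl function, and
\[
  w(z) = +\sqrt{-z}\,(1+o(1))
\]
along the imaginary axis. (Sanity check: for $q=0$, $a$ regular with Dirichlet condition, $\phi(z,x)=\sinh(x\sqrt{-z})/\sqrt{-z}$ gives $w(z)=\sqrt{-z}\,\coth(\tilde{x}\sqrt{-z})\to +\sqrt{-z}$.) With the correct sign and $\xi=(x-\tilde{x})\sqrt{-z}$,
\[
  c(z,x) + w(z)\,s(z,x) = \cosh\xi\,(1+o(1)) + \sinh\xi\,(1+o(1)) = \E^{\xi}(1+o(1)),
\]
since $\cosh\xi\sim\sinh\xi\sim\E^{\xi}/2$ \emph{add} rather than cancel. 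The entire discussion of a cancellation obstacle, of tracking coefficients of $\E^{\pm\xi}$ separately, and of a ``$w$-free'' detour is unnecessary; with your sign the leading term would be $\E^{-\xi}$, which is the wrong answer, and no amount of bookkeeping repairs that.

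One small correction: $\phi(z,\tilde{x})$ is \emph{not} nonzero for every $z\in\C$; it vanishes precisely at the (real) Dirichlet eigenvalues of $H_{(a,\tilde{x})}$. The paper accordingly writes the displayed identity only for $z\in\C\backslash\R$, which is all one needs for the asymptotic.
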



Given some $c\in(a,b)$, we denote with $L^2(a,c)$ the closed linear subspace of $L^2(a,b)$ consisting of all functions which vanish outside of $(a,c)$ almost everywhere. 
Now as in the case of regular left endpoints, one may define the transform of a function $f\in L^2(a,c)$ as
\begin{align}\label{eqndBftrans}
 \hat{f}(z) = \int_a^b \phi(z,x)f(x) dx, \quad z\in\C.
\end{align} 
Given this, it is known (see e.g.\ \cite[Section~3]{geszin}, \cite[Section~3]{kst2}) that there is some Borel measure $\rho$ on $\R$ such that 
\begin{align*}
 \int_\R |\hat{f}(\lambda)|^2 d\rho(\lambda) = \int_a^b |f(x)|^2 dx, \quad f\in L^2(a,c)
\end{align*}
holds for all $c\in(a,b)$.
Moreover, this transformation uniquely extends to a unitary map from $L^2(a,b)$ onto $\LdBsm$ and the operator $H$ is mapped onto multiplication with the independent variable in $\LdBsm$. 
Note that the measure $\rho$ is uniquely determined by these properties and hence referred to as the spectral measure of $H$ associated with the solution $\phi$.

From these results, one sees that the space of transforms of all functions in $L^2(a,c)$, equipped with the norm inherited from the space $\LdBsm$, forms a Hilbert space.
In order to show that it is even a de Branges space, fix some $c\in(a,b)$ and consider the entire function
\begin{align*}
 E(z,c) = \phi(z,c) + \I \phi'(z,c), \quad z\in\C.
\end{align*}
Using the Lagrange identity and the fact that the Wronskian of two solutions satisfying the same boundary condition at $a$ (if any) vanishes in $a$, one gets
\begin{align*} 
 \frac{E(z,c) E^\#(\zeta^\ast,c) - E(\zeta^\ast,c) E^\#(z,c)}{2\I (\zeta^\ast -z)} = \int_a^c \phi(\zeta,x)^\ast \phi(z,x) dx, \quad \zeta,\,z\in\C^+.
\end{align*}
In particular, taking $\zeta=z$ this shows that $E(\,\cdot\,,c)$ is a de Branges function.
Moreover, note that $E(\,\cdot\,,c)$ does not have any real zero $\lambda$, since otherwise both, $\phi(\lambda,c)$ and $\phi'(\lambda,c)$ would vanish.
With $B(c)$ we denote the de Branges space associated with the de Branges function $E(\,\cdot\,,c)$ endowed with the inner product
\begin{align*}
 \dbspr{F}{G}_{B(c)} = \frac{1}{\pi} \int_\R \frac{F(\lambda) G(\lambda)^\ast}{|E(\lambda,c)|^2} d\lambda = \frac{1}{\pi}\int_\R \frac{F(\lambda)G(\lambda)^\ast}{\phi(\lambda,c)^2 + \phi'(\lambda,c)^2}d\lambda, \quad F,\, G\in B(c).
\end{align*}
Now using~\eqref{eqndBrepker} and a similar calculation as above, one shows that the reproducing kernel $K(\,\cdot\,,\cdot\,,c)$ of this space is given by
\begin{align}\label{eqndBschrRepKer}
 K(\zeta,z,c) = \int_a^c \phi(\zeta,x)^\ast \phi(z,x)dx, \quad \zeta,\,z\in\C.
\end{align}

\begin{theorem}\label{thmdBschrBT}
 For every $c\in(a,b)$ the transformation $f\mapsto\hat{f}$ is unitary from $L^2(a,c)$ onto $B(c)$, in particular
 \begin{align}
  B(c) = \big\lbrace  \hat{f} \,\big|\, f\in L^2(a,c) \big\rbrace.
 \end{align}
\end{theorem}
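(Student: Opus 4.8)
The plan is to show the two inclusions separately: every transform $\hat f$ with $f\in L^2(a,c)$ lies in $B(c)$, and conversely every element of $B(c)$ arises as such a transform. For the first inclusion, fix $f\in L^2(a,c)$. By~\eqref{eqndBIsotrans} the function $\hat f$ satisfies $\int_\R |\hat f(\lambda)/E(\lambda,c)|^2\,d\lambda = \int_\R |\hat f(\lambda)|^2\,d\rho(\lambda)<\infty$ once one knows that $d\rho$ restricted to $\R$ agrees with $|E(\lambda,c)|^{-2}d\lambda/\pi$ on the relevant functions; more robustly, one argues that $\hat f$ is entire (differentiate under the integral sign using the entire dependence of $\phi(\cdot,x)$ and local uniform bounds), that the integrability condition holds because the map $f\mapsto\hat f$ is isometric into $\LdBsm$ and, by the reproducing-kernel identity~\eqref{eqndBschrRepKer} together with the fact that $K(\zeta,z,c)$ is exactly the de Branges reproducing kernel built from $E(\cdot,c)$ via~\eqref{eqndBrepker}, the norm of $\hat f$ in $\LdBsm$ coincides with its would-be norm in $B(c)$. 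The remaining point for the first inclusion is the bounded-type condition: $\hat f/E(\cdot,c)$ and $\hat f^\#/E(\cdot,c)$ must be of bounded type in $\C^+$ with nonpositive mean type. Here I would write $\hat f(z)=\int_a^c \phi(z,x)f(x)\,dx$ and use Lemma~\ref{lemSchrPhiAsym} to control $\phi(z,x)/\phi(z,c)$, or more directly compare with $E(\cdot,c)$; combined with a Cauchy–Schwarz estimate and the a priori $L^2(\R)$ bound on $\hat f/E(\cdot,c)$, a Krein-type argument (a function in the Nevanlinna class whose boundary trace is square integrable and which does not grow too fast along $i\R$) gives bounded type with the right mean type.

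For the reverse inclusion I would exploit that the image $\mathcal H(c):=\{\hat f: f\in L^2(a,c)\}$, with the norm inherited from $\LdBsm$, is a Hilbert space of entire functions whose reproducing kernel is precisely $K(\zeta,z,c)$ by~\eqref{eqndBschrRepKer} and~\eqref{eqndBIsotrans} (the reproducing property $\langle \hat f, K(\zeta,\cdot,c)\rangle = \hat f(\zeta)$ follows since $K(\zeta,\cdot,c)$ is the transform of $x\mapsto \phi(\zeta,x)^\ast\indik_{(a,c)}(x)$ and the transform is isometric). By the first inclusion $\mathcal H(c)\subseteq B(c)$ isometrically, and both spaces have the \emph{same} reproducing kernel $K(\cdot,\cdot,c)$. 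Two reproducing-kernel Hilbert spaces of functions on the same set with the same reproducing kernel are equal (with equal norms), because the span of $\{K(\zeta,\cdot,c):\zeta\in\C\}$ is dense in each and the inner products agree on this span. Hence $\mathcal H(c)=B(c)$.

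The main obstacle is the bounded-type / nonpositive-mean-type verification in the first inclusion; everything else is soft reproducing-kernel bookkeeping. To handle it cleanly I would argue as follows: for $f\in L^2(a,c)$ supported in $(a,c')$ with $c'<c$ one has, by Lemma~\ref{lemSchrPhiAsym} and Cauchy–Schwarz, $|\hat f(z)|\le C\,|\phi(z,c')|\,(1+o(1))$ along $i\R$, and $|\phi(z,c')/E(z,c)|$ is bounded there (again by Lemma~\ref{lemSchrPhiAsym} applied to $\phi(z,c')/\phi(z,c)$, since $E(\cdot,c)$ and $\phi(\cdot,c)$ have the same exponential type), so $\hat f/E(\cdot,c)$ is bounded along the imaginary axis; together with membership in $L^2(\R,d\lambda)$ this forces bounded type with nonpositive mean type via the standard Krein theorem on functions of Cartwright class. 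The general $f\in L^2(a,c)$ is then obtained by approximating with compactly-supported-in-$(a,c)$ functions and using that $B(c)$ is closed in $\LdBsm$; alternatively, one notes that the estimate from~\eqref{eqndBschrRepKer} already shows $K(z,z,c)=\|K(z,\cdot,c)\|^2$ grows like $|E(z,c)|^2$ off the real axis, whence by the reproducing property $|\hat f(z)|^2\le \|\hat f\|^2\,K(z,z,c)$ gives the needed bound on $|\hat f(z)/E(z,c)|$ directly, uniformly on $\C^+$, which is even stronger and bypasses the approximation. I would present this last route as the streamlined proof.
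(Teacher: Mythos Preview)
Your second paragraph is the whole proof, and it is essentially what the paper does. Once you know that $\mathcal H(c)=\{\hat f:f\in L^2(a,c)\}$, with the $\LdBsm$ norm, is a reproducing-kernel Hilbert space of entire functions with kernel $K(\cdot,\cdot,c)$, and that $B(c)$ is also a reproducing-kernel Hilbert space with the \emph{same} kernel (this is~\eqref{eqndBschrRepKer}), uniqueness of the RKHS attached to a given kernel gives $\mathcal H(c)=B(c)$ with equal norms immediately; no prior inclusion $\mathcal H(c)\subseteq B(c)$ is required. The paper carries this out by hand: it checks that the transform of $f_\lambda=\phi(\lambda,\cdot)\indik_{(a,c)}$ is $K(\lambda,\cdot,c)\in B(c)$ with matching norm, observes that the $f_\lambda$ span a dense subspace of $L^2(a,c)$ (it contains the eigenfunctions of $H_{(a,c)}$) while the $K(\lambda,\cdot,c)$ span a dense subspace of $B(c)$, and extends by continuity of point evaluation.

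What you label the ``main obstacle''---the direct verification that $\hat f/E(\cdot,c)$ is of bounded type with nonpositive mean type---is therefore unnecessary, and your attempt at it has genuine gaps. First, you assert that the $B(c)$ norm of $\hat f$ equals its $\LdBsm$ norm, but these are integrals against different measures ($|E(\lambda,c)|^{-2}d\lambda/\pi$ versus $d\rho$); their agreement on $B(c)$ is exactly Corollary~\ref{cordBschrembL2}, which is deduced \emph{from} the theorem, so invoking it here is circular. Second, your ``streamlined'' bound $|\hat f(z)|^2\le\|\hat f\|^2\,K(z,z,c)$ does not make $\hat f/E(\cdot,c)$ bounded on $\C^+$: from~\eqref{eqndBrepker} one only has $K(z,z,c)\le |E(z,c)|^2/(4\,\im z)$, which blows up as $z$ approaches the real line, so the claimed uniform bound on $\C^+$ is false. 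The upshot is that you never need to enter this territory: the kernels $K(\lambda,\cdot,c)$ lie in $B(c)$ by construction, so the bounded-type axiom is satisfied for them automatically, and density does the rest. Drop the first-inclusion discussion and present only the reproducing-kernel argument.
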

 
\begin{proof}
 For each $\lambda\in\R$ consider the function
 \begin{align*}
  f_\lambda(x) = \begin{cases} \phi(\lambda,x), & x\in(a,c], \\
                               0,               & x\in(c,b). \end{cases}
 \end{align*}
 The transforms of these functions are given by
 \begin{align*}
  \hat{f}_\lambda(z) = \int_a^c \phi(\lambda,x) \phi(z,x)dx = K(\lambda,z,c), \quad z\in\C.
 \end{align*}
 In particular, this shows that the transforms of the functions $f_\lambda$, $\lambda\in\R$ lie in $B(c)$. Moreover, we have for all $\lambda_1$, $\lambda_2\in\R$
 \begin{align*}
  \spr{f_{\lambda_1}}{f_{\lambda_2}}  = \int_a^c \phi(\lambda_1,x) \phi(\lambda_2,x) dx = K(\lambda_1,\lambda_2,c)                 = \dbspr{K(\lambda_1,\cdot\,,c)}{K(\lambda_2,\cdot\,,c)}_{B(c)}. 
 \end{align*}
 Hence our transform is an isometry on the linear span $D$ of all functions $f_\lambda$, $\lambda\in\R$.
  But this span is dense in $L^2(a,c)$ since it contains the eigenfunctions of the restricted operator $H_c$ in $L^2(a,c)$.
 Moreover, the linear span of all transforms $K(\lambda,\cdot\,,c)$, $\lambda\in\R$ is dense in $B(c)$. Indeed, each $F\in B(c)$ such that
 \begin{align*}
  0 = \dbspr{F}{K(\lambda,\cdot\,,c)}_{B(c)} = F(\lambda), \quad \lambda\in\R
 \end{align*}
 vanishes identically. Thus our transformation restricted to $D$ uniquely extends to a unitary map $V$ from $L^2(a,c)$ onto $B(c)$.
 In order to identify $V$ with our transformation, note that for each fixed $z\in\C$, both $f\mapsto\hat{f}(z)$ and $f\mapsto Vf(z)$ are continuous on $L^2(a,c)$.
\end{proof}

As an immediate consequence of Theorem~\ref{thmdBschrBT} and the fact that our transformation from~\eqref{eqndBftrans} extends 
 to a unitary map from $L^2(a,b)$ onto $\LdBsm$, we get the following corollary.

\begin{corollary}\label{cordBschrembL2}
For each $c\in(a,b)$ the de Branges space $B(c)$ is isometrically embedded in $\LdBsm$, that is, 
\begin{align}
 \int_\R |F(\lambda)|^2 d\rho(\lambda) = \|F\|^2_{B(c)}, \quad F\in B(c).
\end{align}
Moreover, the union of the spaces $B(c)$, $c\in(a,b)$ is dense in $\LdBsm$ 
\begin{align}\label{eqndBschrdense}
 \overline{\bigcup_{c\in(a,b)} B(c)} = \LdBsm.
\end{align}
\end{corollary}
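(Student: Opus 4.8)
The plan is to read off both assertions directly from Theorem~\ref{thmdBschrBT} together with the two facts recorded just before it: the isometry relation~\eqref{eqndBIsotrans} and the statement that $f\mapsto\hat f$ extends to a unitary map from $L^2(a,b)$ onto $\LdBsm$.

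First I would establish the isometric embedding. Fix $c\in(a,b)$ and take $F\in B(c)$. By Theorem~\ref{thmdBschrBT} there is a unique $f\in L^2(a,c)$ with $\hat f=F$ and $\|F\|_{B(c)}=\|f\|_{L^2(a,c)}$; moreover $\|f\|_{L^2(a,c)}=\|f\|_{L^2(a,b)}$ since $f$ vanishes outside $(a,c)$. Feeding this $f$ into~\eqref{eqndBIsotrans} yields
\begin{align*}
 \int_\R |F(\lambda)|^2 d\rho(\lambda) = \int_a^b |f(x)|^2 dx = \|F\|_{B(c)}^2,
\end{align*}
which is the claimed identity; in particular every $F\in B(c)$ restricts on $\R$ to an element of $\LdBsm$, so the natural inclusion $B(c)\hookrightarrow\LdBsm$ is a well-defined isometry.

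For the density statement, I would use that the global transform $U\colon L^2(a,b)\to\LdBsm$ is unitary and agrees with the pointwise formula~\eqref{eqndBftrans} on each subspace $L^2(a,c)$, so that $U$ carries $\bigcup_{c\in(a,b)} L^2(a,c)$ onto $\bigcup_{c\in(a,b)} B(c)$ by Theorem~\ref{thmdBschrBT}. Since $\bigcup_{c\in(a,b)} L^2(a,c)$ is dense in $L^2(a,b)$ — for $f\in L^2(a,b)$ the truncations $f\indik_{(a,c)}$ converge to $f$ in norm as $c\uparrow b$ by dominated convergence — and a unitary map sends dense sets to dense sets, the image $\bigcup_{c\in(a,b)} B(c)$ is dense in $\LdBsm$, which is exactly~\eqref{eqndBschrdense}.

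Since everything here is an application of already established facts, I do not anticipate a genuine obstacle; the only point one should not skip over is the compatibility of the pointwise formula~\eqref{eqndBftrans} with the abstract unitary extension on $\bigcup_{c\in(a,b)} L^2(a,c)$, but this is already part of the construction in~\cite[Section~3]{kst2}, so the corollary is indeed immediate.
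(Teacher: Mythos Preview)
Your argument is correct and is exactly the approach the paper takes: the corollary is stated as an immediate consequence of Theorem~\ref{thmdBschrBT} together with the isometry~\eqref{eqndBIsotrans} and the unitarity of the extended transform $L^2(a,b)\to\LdBsm$, and you have simply spelled out these two steps. (You also silently corrected the missing square on $\|F\|_{B(c)}$ in the displayed identity.)
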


The following corollary shows that the de Branges spaces $B(c)$, $c\in(a,b)$ are totally ordered, strictly increasing and continuous in some sense.

\begin{corollary}\label{cordBschrincl}
If $c_1$, $c_2\in(a,b)$ with $c_1<c_2$, then $B(c_1)$ is isometrically embedded in, but not equal to $B(c_2)$.
Moreover, for each $c\in(a,b)$ we have
\begin{align}\label{eqndBschrcontinuous}
 \overline{\bigcup_{x\in(a,c)} B(x)} = B(c) = \bigcap_{x\in(c,b)}B(x).
\end{align}
\end{corollary}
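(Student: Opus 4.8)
The plan is to derive everything directly from Theorem~\ref{thmdBschrBT} together with the unitarity of the transform $f\mapsto\hat f$; none of the de Branges machinery of Section~\ref{secdB} (the ordering theorem, Lemma~\ref{lemdBordcon}, Lemma~\ref{lemSchrPhiAsym}) is needed here. First, since $L^2(a,c_1)\subseteq L^2(a,c_2)$ whenever $c_1<c_2$, Theorem~\ref{thmdBschrBT} gives $B(c_1)=\{\hat f : f\in L^2(a,c_1)\}\subseteq\{\hat f: f\in L^2(a,c_2)\}=B(c_2)$, and this inclusion is isometric because by Corollary~\ref{cordBschrembL2} both norms coincide with that of $\LdBsm$. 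To see that the inclusion is proper, suppose $B(c_1)=B(c_2)$ as de Branges spaces; then they have the same reproducing kernel, so in particular $K(\lambda,\lambda,c_1)=K(\lambda,\lambda,c_2)$ for every $\lambda\in\R$. By~\eqref{eqndBschrRepKer} this forces $\int_{c_1}^{c_2}|\phi(\lambda,x)|^2\,dx=0$, which is absurd since $\phi(\lambda,\cdot)$ is a nontrivial solution of a second-order differential equation and hence cannot vanish throughout $(c_1,c_2)$.

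For the left-continuity identity, the inclusion $\overline{\bigcup_{x\in(a,c)}B(x)}\subseteq B(c)$ is immediate from the monotonicity just proved together with the fact that $B(c)$ is a closed subspace of $\LdBsm$. For the reverse inclusion I would observe that $\bigcup_{x\in(a,c)}L^2(a,x)$ is dense in $L^2(a,c)$, since any $f\in L^2(a,c)$ is the $L^2$-limit of its truncations $f\,\indik_{(a,x)}$ as $x\uparrow c$. Because the transform is unitary from $L^2(a,c)$ onto $B(c)$ and, by Theorem~\ref{thmdBschrBT} applied on each subinterval, carries $L^2(a,x)$ onto $B(x)$ for $x<c$, it maps this dense set onto $\bigcup_{x\in(a,c)}B(x)$, which is therefore dense in $B(c)$.

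For the right-continuity identity only the inclusion $\bigcap_{x\in(c,b)}B(x)\subseteq B(c)$ requires work. Fix $F$ in the intersection and choose any $x_0\in(c,b)$. By Theorem~\ref{thmdBschrBT} there is a unique $f\in L^2(a,x_0)$ with $\hat f=F$. For each $x\in(c,x_0)$ the hypothesis $F\in B(x)$ yields some $g\in L^2(a,x)\subseteq L^2(a,x_0)$ with $\hat g=F$; injectivity of the transform on $L^2(a,x_0)$ then gives $f=g$, so that $f$ vanishes almost everywhere outside $(a,x)$. Since this holds for every $x\in(c,x_0)$, the function $f$ vanishes a.e.\ outside $(a,c]$, i.e.\ $f\in L^2(a,c)$, whence $F=\hat f\in B(c)$ by Theorem~\ref{thmdBschrBT} again.

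The arithmetic behind the whole corollary is essentially trivial; the only step that calls for genuine (if still mild) care is this last one, where one must first pin down a single preimage $f$ of $F$ and then show its support shrinks down to $(a,c]$. Everything else reduces to density of truncations in $L^2$ and the transport of that density through the unitary transform.
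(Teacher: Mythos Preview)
Your proof is correct and follows essentially the same route as the paper: both transfer the elementary $L^2$ facts
\[
 \overline{\bigcup_{x\in(a,c)} L^2(a,x)} = L^2(a,c) = \bigcap_{x\in(c,b)} L^2(a,x)
\]
through the unitary transform of Theorem~\ref{thmdBschrBT}, with the isometry coming from Corollary~\ref{cordBschrembL2}. The only cosmetic difference is in the strict-inclusion step, where the paper observes that $B(c_2)\ominus B(c_1)$ is unitarily equivalent to $L^2(c_1,c_2)\neq\{0\}$, while you instead compare the reproducing kernels via~\eqref{eqndBschrRepKer}; both arguments are immediate.
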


\begin{proof}
 The embedding is clear from Theorem~\ref{thmdBschrBT} and Corollary~\ref{cordBschrembL2}. 
  Moreover, Theorem~\ref{thmdBschrBT} shows that $B(c_2)\,\ominus\, B(c_1)$ is unitarily equivalent to the space $L^2(c_1,c_2)$, hence $B(c_1)$ is not equal to $B(c_2)$.
  The second claim follows from the similar fact 
  \begin{align*}
   \overline{\bigcup_{x\in(a,c)} L^2(a,x)} = L^2(a,c) = \bigcap_{x\in(c,b)} L^2(a,x).
  \end{align*}
\end{proof}

As a final remark, note that the solution $\phi$ is not uniquely determined. 
 In fact,~\cite[Corollary~2.3]{kst2} shows that any other solution with the same properties as $\phi$ is given by
 \begin{align*}
  \tilde{\phi}(z,x) = \E^{g(z)} \phi(z,x), \quad x\in(a,b),~z\in\C,
 \end{align*}
 where $\E^g$ is some real entire function without zeros.
 Furthermore, \cite[Remark~3.8]{kst2} shows that the corresponding spectral measures are related by
 \begin{align*}
  \tilde{\rho} = \E^{-2g} \rho.
 \end{align*}
 In particular, they are mutually absolutely continuous.
 Using Theorem~\ref{thmdBschrBT}, it is easily seen that for each $c\in(a,b)$, multiplication with the entire function $\E^g$ maps $B(c)$ isometrically onto the corresponding de Branges space $\tilde{B}(c)$.

\section{Uniqueness of the inverse problem}\label{secdBuniq}

We will now prove our inverse uniqueness result.
Therefore, let $q_1$, $q_2$ be two real-valued, locally integrable functions on intervals $(a_1,b_1)$ respectively  $(a_2,b_2)$ and 
 $H_1$, $H_2$ be two associated self-adjoint Schr\"{o}dinger operators with separated boundary conditions.
 As in the previous section, we suppose that there are nontrivial real entire solutions $\phi_1$, $\phi_2$ which are square integrable near the left endpoint and satisfy the respective boundary condition there, if any.
 All remaining quantities corresponding to $H_1$ and $H_2$ are denoted in an obvious way with an additional subscript. 
 We say $H_1$ and $H_2$ are equal up to some shift if there is a linear function $\eta$ with $\eta'=1$, mapping $(a_1,b_1)$ onto $(a_2,b_2)$ such that $q_1=q_2\circ\eta$ and 
\begin{align*}
H_1=U^{-1}H_2\,U,
\end{align*}
 where $U$ is the unitary map from $L^2(a_1,b_1)$ onto $L^2(a_2,b_2)$ induced by $\eta$.

\begin{theorem}\label{thmdBuniqS}
  Suppose there is a real entire function $\E^g$ without zeros such that
  \begin{align}\label{eqnquotE1E2}
   \E^{g(z)} \frac{E_1(z,x_1)}{E_2(z,x_2)}, \quad z\in\C^+
  \end{align}
  is of bounded type for some $x_1\in (a_1,b_1)$ and $x_2\in(a_2,b_2)$.
  If $\rho_1=\E^{-2g} \rho_2$, then $H_1$ and $H_2$ are equal up to some shift.
\end{theorem}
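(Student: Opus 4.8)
The plan is to prove that, after renormalizing the solutions and allowing a shift, the two totally ordered families of de Branges spaces $\{B_1(x)\}_{x\in(a_1,b_1)}$ and $\{B_2(y)\}_{y\in(a_2,b_2)}$ coincide, and then to deduce the equality of the operators from the resulting identity of reproducing kernels. First I would exploit the freedom in the choice of $\phi$ — multiplying $\phi$ by $\E^{g}$ replaces $\rho$ by $\E^{-2g}\rho$ and $E$ by $\E^{g}E$ — to renormalize $\phi_1$ (or $\phi_2$) by an exponential factor; after this harmless change we may assume $g\equiv0$, so that $\rho_1=\rho_2=:\rho$, all spaces $B_1(x)$ and $B_2(y)$ are isometrically embedded in the \emph{single} Hilbert space $\LdBsm$ by Corollary~\ref{cordBschrembL2}, and, by hypothesis, $E_1(\cdot,x_1)/E_2(\cdot,x_2)$ is of bounded type in $\C^+$.

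Next I would show that $B_1(x)$ and $B_2(y)$ are comparable (one contains the other) for \emph{every} $x\in(a_1,b_1)$ and $y\in(a_2,b_2)$. Inside either family the spaces are totally ordered (Corollary~\ref{cordBschrincl}), so by Lemma~\ref{lemdBordcon} every quotient $E_1(\cdot,x)/E_1(\cdot,x')$, $E_2(\cdot,y)/E_2(\cdot,y')$ is of bounded type; factoring
\begin{align*}
 \frac{E_1(\cdot,x)}{E_2(\cdot,y)}=\frac{E_1(\cdot,x)}{E_1(\cdot,x_1)}\cdot\frac{E_1(\cdot,x_1)}{E_2(\cdot,x_2)}\cdot\frac{E_2(\cdot,x_2)}{E_2(\cdot,y)}
\end{align*}
then shows that every quotient $E_1(\cdot,x)/E_2(\cdot,y)$ is of bounded type, without real zeros or singularities since each $E_i(\cdot,c)$ is entire and has no real zeros. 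Theorem~\ref{thmdBOrdering} thus applies to every such pair. Having this, I would fix $x$ and consider the two subsets $\{y:B_2(y)\subseteq B_1(x)\}$ and $\{y:B_1(x)\subseteq B_2(y)\}$ of $(a_2,b_2)$; by comparability and monotonicity they are, respectively, a lower and an upper subinterval covering $(a_2,b_2)$, and using the continuity relation~\eqref{eqndBschrcontinuous} one sees that they overlap in precisely one point $\eta(x)$, i.e.\ $B_1(x)=B_2(\eta(x))$. The degenerate cases (one of the two sets empty) are excluded because each $B_1(x)$ is a nonzero proper subspace of $\LdBsm$ — nonzero as the unitary image of $L^2(a_1,x)\neq\{0\}$, proper since $B_1(x)\subsetneq B_1(x')$ for $x<x'$. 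Swapping the roles of the two families shows $\eta\colon(a_1,b_1)\to(a_2,b_2)$ is a bijection, and it is strictly increasing and continuous because both families are (Corollary~\ref{cordBschrincl} and~\eqref{eqndBschrcontinuous}).

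The remaining task is to identify $\eta$ and to recover $q$ and $H$. Since $B_1(x)=B_2(\eta(x))$ as de Branges spaces, their reproducing kernels agree, which by~\eqref{eqndBschrRepKer} means
\begin{align*}
 \int_{a_1}^{x}\phi_1(\zeta,t)^\ast\phi_1(z,t)\,dt=\int_{a_2}^{\eta(x)}\phi_2(\zeta,t)^\ast\phi_2(z,t)\,dt,\qquad \zeta,z\in\C.
\end{align*}
As $\eta$ is monotone it is differentiable almost everywhere; differentiating with $\zeta=z$ and dividing by the same identity at a reference point $x_0$ gives
\begin{align*}
 \frac{|\phi_1(z,x)|^2}{|\phi_1(z,x_0)|^2}=\frac{|\phi_2(z,\eta(x))|^2}{|\phi_2(z,\eta(x_0))|^2}\cdot\frac{\eta'(x)}{\eta'(x_0)},
\end{align*}
and letting $|z|\to\infty$ along the imaginary axis, so that $\re\sqrt{-z}\to\infty$, and applying Lemma~\ref{lemSchrPhiAsym} to each ratio forces $x-x_0=\eta(x)-\eta(x_0)$ (for a.e., hence by continuity for all, $x,x_0$). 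Therefore $\eta(x)=x+\kappa$ for some constant $\kappa$, which is exactly the gradient-one affine bijection of $(a_1,b_1)$ onto $(a_2,b_2)$. Substituting this back and differentiating the kernel identity in $x$ yields $\phi_1(\zeta,x)^\ast\phi_1(z,x)=\phi_2(\zeta,x+\kappa)^\ast\phi_2(z,x+\kappa)$; taking $\zeta=z$ fixes $|\phi_1(z,x)|=|\phi_2(z,x+\kappa)|$, and then a fixed real value of $\zeta$ pins the phase, giving $\phi_1(z,x)=\pm\phi_2(z,x+\kappa)$ for all $z$ and $x$. Inserting this into the two Schr\"odinger equations gives $q_1=q_2\circ\eta$, and, since the transform for $H_1$ then equals (up to a sign) the transform for $H_2$ precomposed with the unitary shift $U$ induced by $\eta$, and both transforms are unitary onto $\LdBsm$ intertwining the respective operator with multiplication by the independent variable, we conclude $H_1=U^{-1}H_2U$; that is, $H_1$ and $H_2$ are equal up to the shift $\eta$.

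I expect the principal difficulty to lie in the second paragraph, in the step from ``the two families are pairwise comparable'' to ``the two families actually coincide'': this is precisely where the strictness and continuity of the chains and the fact that no $B_i(x)$ fills out all of $\LdBsm$ are indispensable, and it is what produces the order isomorphism $\eta$. The subsequent identification of $\eta$ as a unit shift and the recovery of $q$ and $H$ are comparatively mechanical, resting on Lemma~\ref{lemSchrPhiAsym} and the explicit kernel formula~\eqref{eqndBschrRepKer}.
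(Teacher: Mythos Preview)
Your proposal is correct and follows essentially the same route as the paper: reduce to $g\equiv0$, propagate the bounded-type hypothesis to all pairs $(x,y)$ via Lemma~\ref{lemdBordcon}, apply the ordering theorem and the continuity relation~\eqref{eqndBschrcontinuous} to produce the order isomorphism $\eta$, then use the reproducing-kernel identity together with Lemma~\ref{lemSchrPhiAsym} to force $\eta$ to be a unit shift and recover the potential. The only substantive deviations are in execution---the paper invokes the implicit function theorem (rather than a.e.\ differentiability of monotone functions) to differentiate the kernel identity, rules out the case $B_1(x)\subseteq\bigcap_y B_2(y)$ via the explicit estimate $|F(\zeta)|^2\le\|F\|^2 K_2(\zeta,\zeta,y)\to0$ (your ``$B_1(x)$ is nonzero'' implicitly needs this), and extracts $q_1=q_2\circ\eta$ by taking logarithmic derivatives of $\phi_1(\lambda,\cdot)^2=\phi_2(\lambda,\eta(\cdot))^2$ for real $\lambda$ rather than first isolating $\phi_1=\pm\phi_2\circ\eta$.
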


\begin{proof}
First of all note that without loss of generality we may assume that $g$ vanishes identically, since otherwise we replace $\phi_1$ with $\E^g \phi_1$.
Moreover, because of Lemma~\ref{lemdBordcon} the function in~\eqref{eqnquotE1E2} is of bounded type for all points $x_1\in(a_1,b_1)$ and $x_2\in(a_2,b_2)$.
Now fix some arbitrary $x_1\in(a_1,b_1)$. 
Since for each $x_2\in(a_2,b_2)$, both $B_1(x_1)$ and $B_2(x_2)$ are isometrically contained in $L^2(\R;\rho_1)$, we infer from Theorem~\ref{thmdBOrdering} 
 (note that~\eqref{eqnquotE1E2} has no real zeros or singularities because $E_1(\,\cdot\,,x_1)$ and $E_2(\,\cdot\,,x_2)$ do not have real zeros)
 that $B_1(x_1)$ is contained in $B_2(x_2)$ or $B_2(x_2)$ is contained in $B_1(x_1)$.
 We claim that the infimum $\eta(x_1)$ of all $x_2\in(a_2,b_2)$ such that $B_1(x_1)\subseteq B_2(x_2)$ lies in $(a_2,b_2)$.
 Indeed, otherwise we either had $B_2(x_2)\subseteq B_1(x_1)$ for all $x_2\in(a_2,b_2)$ or $B_1(x_1)\subseteq B_2(x_2)$ for all $x_2\in(a_2,b_2)$.
 In the first case this would mean that $B_1(x_1)$ is dense in $\LdBsm$, which is not possible in view of Corollary~\ref{cordBschrincl}.
  The second case would imply that for every function $F\in B_1(x_1)$ and $\zeta\in\C$ we have
 \begin{align*}
  |F(\zeta)|^2 & \leq \left|\dbspr{F}{K_2(\zeta,\cdot\,,x_2)}_{B_2(x_2)} \right|^2 
              \leq \|F\|_{B_2(x_2)}^2 \dbspr{K_2(\zeta,\cdot\,,x_2)}{K_2(\zeta,\cdot\,,x_2)}_{B_2(x_2)} \\
             & = \|F\|_{B_1(x_1)}^2 K_2(\zeta,\zeta,x_2) 
 \end{align*}
 for each $x_2\in(a_2,b_2)$.
 But since $K_2(\zeta,\zeta,x_2)\rightarrow0$ as $x_2\rightarrow a_2$ by~\eqref{eqndBschrRepKer}, we then had $B_1(x_1)=\lbrace 0\rbrace$, contradicting Theorem~\ref{thmdBschrBT}.
  Now from~\eqref{eqndBschrcontinuous} we infer that 
 \begin{align*}
   B_2(\eta(x_1)) = \overline{\bigcup_{x_2<\eta(x_1)} B_2(x_2)} \subseteq B_1(x_1) \subseteq \bigcap_{x_2>\eta(x_1)} B_2(x_2) = B_2(\eta(x_1))
 \end{align*}
 and hence $B_1(x_1)=B_2(\eta(x_1))$, including norms. 
 
 The function $\eta: (a_1,b_1)\rightarrow(a_2,b_2)$ defined this way is strictly increasing because of Corollary~\ref{cordBschrincl} and continuous by~\eqref{eqndBschrcontinuous}.
 Moreover, since for each  $\zeta\in\C$ we have 
  \begin{align*}
    K_2(\zeta,\zeta,\eta(x_1)) = K_1(\zeta,\zeta,x_1) \rightarrow 0,
  \end{align*}
  as $x_1\rightarrow a_1$, we infer that $\eta(x_1)\rightarrow a_2$ as $x_1\rightarrow a_1$.
  Finally,~\eqref{eqndBschrdense} shows that $\eta$ actually has to be a bijection.  
 Using the equation for the reproducing kernels~\eqref{eqndBschrRepKer} once more, we get for each $z\in\C$ 
 \begin{align*}
  \int_{a_1}^{x_1} |\phi_1(z,x)|^2 dx = \int_{a_2}^{\eta(x_1)} |\phi_2(z,x)|^2 dx, \quad x_1\in(a_1,b_1).
 \end{align*}
 Now by the implicit function theorem,  $\eta$ is differentiable (note that the integrand does not vanish if $z\in\C\backslash\R$) with
 \begin{align}\tag{$*$}\label{eqndBSMunique}
   |\phi_1(z,x_1)|^2 = \eta'(x_1) |\phi_2(z,\eta(x_1))|^2, \quad x_1\in(a_1,b_1),
 \end{align}
 which shows that $\eta$ is linear with $\eta'=1$ in view of the asymptotics in Lemma~\ref{lemSchrPhiAsym}. 

 Taking logarithmic derivatives in~\eqref{eqndBSMunique}, for each $\lambda\in\R$ we obtain 
 \begin{align}\tag{$**$}\label{eqndBSMuniquephiphi}
  \frac{\phi_1'(\lambda,x_1)}{\phi_1(\lambda,x_1)} = \frac{\phi_2'(\lambda,\eta(x_1))}{\phi_2(\lambda,\eta(x_1))}
 \end{align}
 for almost all $x_1\in(a_1,b_1)$.
 Differentiating this equation once more, we get
 \begin{align*}
  \frac{\phi_1''(\lambda,x_1)}{\phi_1(\lambda,x_1)} = \frac{\phi_2''(\lambda,\eta(x_1))}{\phi_2(\lambda,\eta(x_1))}
 \end{align*}
 for almost all $x_1\in(a_1,b_1)$ and thus also 
 \begin{align*}
 q_1(x_1)= \lambda + \frac{\phi_1''(\lambda,x_1)}{\phi_1(\lambda,x_1)} = \lambda + \frac{\phi_2''(\lambda,\eta(x_1))}{\phi_2(\lambda,\eta(x_1))} = q_2(\eta(x_1))
 \end{align*}
 for almost all $x_1\in(a_1,b_1)$.
  
 Finally note that~\eqref{eqndBSMuniquephiphi} implies that $\phi_1(\lambda,\cdot\,)$ and $\phi_2(\lambda,\eta(\cdot))$ are linearly dependent for each $\lambda\in\R$.
 In particular, if $\tau_1$ (and hence also $\tau_2$) is in the limit-circle case at the left endpoint, then this shows that the boundary conditions of $H_1$ and $H_2$ are the same there.
  Furthermore, if $\tau_1$ (and hence also $\tau_2$) is in the limit-circle case at the right endpoint, then $H_1$ and $H_2$ have some common eigenvalue $\lambda\in\R$. 
  Now the fact that $\phi_1(\lambda,\cdot\,)$ and $\phi_2(\lambda,\cdot\,)$ satisfy the respective boundary condition at the right endpoint shows that $H_1$ is equal to $H_2$ up to some shift.
\end{proof}

Note that even if one fixes the left endpoint, the operator is determined by the spectral measure in general only up to some shift.
This is due to the fact that we allowed the left endpoint to possibly be infinite. 
In fact, if one takes finite fixed left endpoints, the operators are uniquely determined by the spectral measure.

\begin{corollary}\label{cordBuniqnoshift}
 Suppose that $-\infty<a_1=a_2$ and that there is a real entire function $\E^g$ without zeros  such that
  \begin{align*}
   \E^{g(z)} \frac{E_1(z,x_1)}{E_2(z,x_2)}, \quad z\in\C^+
  \end{align*}
  is of bounded type for some $x_1\in (a_1,b_1)$ and $x_2\in(a_2,b_2)$.
  If $\rho_1=\E^{-2g} \rho_2$, then $b_1=b_2$, $q_1=q_2$ and $H_1=H_2$.
\end{corollary}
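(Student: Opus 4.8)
The idea is to apply Theorem~\ref{thmdBuniqS} directly and then argue that the ``shift'' is forced to be the identity because the left endpoints coincide and are finite. By Theorem~\ref{thmdBuniqS}, the hypotheses give a linear function $\eta$ of gradient one with $\eta((a_1,b_1)) = (a_2,b_2)$, $q_1 = q_2\circ\eta$, and $H_1 = U^{-1} H_2 U$, where $U$ is induced by $\eta$. Since $\eta$ has gradient one and is a bijection from $(a_1,b_1)$ onto $(a_2,b_2)$, it has the form $\eta(x) = x + \kappa$ for some constant $\kappa$, and it maps $a_1$ to $a_2$ in the sense that $\eta(x)\downarrow a_2$ as $x\downarrow a_1$; that is, $a_1 + \kappa = a_2$. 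Because $a_1 = a_2$ is finite by hypothesis, we conclude $\kappa = 0$, so $\eta$ is the identity map on $(a_1,b_1) = (a_2,b_2)$.

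Once $\eta = \mathrm{id}$, everything collapses: from $\eta((a_1,b_1)) = (a_2,b_2)$ we get $b_1 = b_2$; from $q_1 = q_2\circ\eta$ we get $q_1 = q_2$ (as locally integrable functions, hence almost everywhere, which is all that is meant); and $U$ becomes the identity operator on $L^2(a_1,b_1) = L^2(a_2,b_2)$, so $H_1 = U^{-1} H_2 U = H_2$.

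I do not expect any serious obstacle here; the content is entirely in Theorem~\ref{thmdBuniqS}, and this corollary just pins down the shift constant. The only point requiring a word of care is why $a_1 = a_2$ being finite forces $\kappa = 0$: one must invoke the fact, established inside the proof of Theorem~\ref{thmdBuniqS}, that $\eta(x_1)\downarrow a_2$ as $x_1\downarrow a_1$ (which came from $K_2(\zeta,\zeta,\eta(x_1)) = K_1(\zeta,\zeta,x_1)\to 0$). Combining this with $\eta(x) = x + \kappa$ yields $a_2 = a_1 + \kappa$, and finiteness of $a_1 = a_2$ then makes the subtraction legitimate, giving $\kappa = 0$. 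With infinite left endpoints this last step would fail, which is exactly the phenomenon flagged in the remark preceding the corollary.
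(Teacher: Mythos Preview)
Your proposal is correct and follows exactly the paper's approach: apply Theorem~\ref{thmdBuniqS} and use the fact (established inside that proof) that $\lim_{x_1\downarrow a_1}\eta(x_1)=a_2=a_1$, which together with $\eta(x)=x+\kappa$ and finiteness of $a_1$ forces $\kappa=0$. The paper states this in a single line; you have simply unpacked it with appropriate care.
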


\begin{proof}
 This follows from Theorem~\ref{thmdBuniqS} and $\lim_{x_1\rightarrow a_1}\eta(x_1) = a_1$.
\end{proof}

Below we will see that some kind of growth restriction on the solutions $\phi_1$ and $\phi_2$ suffices to guarantee that~\eqref{eqnquotE1E2} is of bounded type.
However, note that this condition in Theorem~\ref{thmdBuniqS} can not be dropped and that some assumption has to be imposed on the solutions $\phi_1$ and $\phi_2$.
 As an example, consider the interval $(0,\pi)$, the potential $q_1=0$ and let $H_1$ be the associated Schr\"{o}dinger operator with Dirichlet boundary conditions.
 As our real entire solution $\phi_1$ we choose
 \begin{align*}
  \phi_1(z,x) = \frac{\sin\sqrt{z}x}{\sqrt{z}}, \quad x\in(0,\pi),~z\in\C.
 \end{align*}
 The associated spectral measure $\rho_1$ is given by
\begin{align*}
 \rho_1 = \frac{2}{\pi} \sum_{n\in\N} n^2 \delta_{n^2},
\end{align*}
 where for each $n\in\N$, $\delta_{n^2}$ is the Dirac measure in the point $n^2$.
 Now choose some sequence $\kappa_n$, $n\in\N$ of positive reals such that all but finitely many of these numbers are equal to one.
 From the solution of the inverse spectral problem in the regular case, it is known (see e.g.~\cite{levitan}, \cite{poestrub}) that there is some potential $q_2\in L^2(0,\pi)$ 
 and a corresponding operator $H_2$ with Dirichlet boundary conditions such that the spectral measure $\rho_2$ associated with the real entire solution $\phi_2$ of 
 \begin{align*}
   -\phi_2''(z,x) + q_2(x)\phi_2(z,x) = z\phi_2(z,x), \quad x\in(0,\pi),~z\in\C
 \end{align*}
 with the initial conditions $\phi_2(z,0) = 0$ and $\phi_2'(z,0) = 1$, $z\in\C$ is given by
 \begin{align*}
  \rho_2 = \frac{2}{\pi} \sum_{n\in\N} \kappa_n n^2 \delta_{n^2}.
 \end{align*}
 Now pick some real entire function $g$ such that
 \begin{align*}
  g(n^2) = \frac{\ln\kappa_n}{2}, \quad n\in\N
 \end{align*}
 and switch to the real entire solution
 \begin{align*}
  \tilde{\phi}_2(z,x) = \E^{g(z)} \phi_2(z,x), \quad x\in(0,\pi),~z\in\C.
 \end{align*}
 Then the spectral measure associated with this solution is equal to $\rho_1$, but the corresponding operators $H_1$ and $H_2$ are different (at least if not all $\kappa_n$ are equal to one).
 However, also note that in this case~\eqref{eqnquotE1E2} seems to fail to be of bounded type rather badly, since the function $\E^g$ is not even of finite exponential order.

We conclude this section by showing that condition~\eqref{eqnquotE1E2} in Theorem~\ref{thmdBuniqS} holds if the solutions $\phi_1$, $\phi_2$ satisfy some growth condition. 
Therefore, recall that an entire function $F$ belongs to the Cartwright class $\mathcal{C}$ if it is of finite exponential type and the logarithmic integral
\begin{align*}
 \int_\R \frac{\ln^+|F(\lambda)|}{1+\lambda^2}d\lambda < \infty
\end{align*}
exists, where $\ln^+$ is the positive part of the natural logarithm. In particular, note that the class $\mathcal{C}$ contains all entire functions of exponential order less than one.
Now a theorem of Krein~\cite[Theorem~6.17]{rosrov}, \cite[Section~16.1]{lev} states that the class $\mathcal{C}$ consists of all entire functions which are of bounded type in the upper and in the lower complex half-plane.
Since the quotient of two functions of bounded type is of bounded type itself, this immediately yields the following uniqueness result.

\begin{corollary}\label{cordBuniqScor}
 Suppose that $E_1(\,\cdot\,,x_1)$ and $E_2(\,\cdot\,,x_2)$ belong to the Cart\-wright class $\mathcal{C}$ for some $x_1\in(a_1,b_1)$ and $x_2\in(a_2,b_2)$. If $\rho_1=\rho_2$, then $H_1$ and $H_2$ are equal up to some shift.
\end{corollary}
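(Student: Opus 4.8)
The plan is to derive the statement directly from Theorem~\ref{thmdBuniqS} by taking $g\equiv 0$. Since $\rho_1=\rho_2$, the spectral hypothesis $\rho_1=\E^{-2g}\rho_2$ of that theorem holds for this choice of $g$, so the only point that needs checking is that
\begin{align*}
 \frac{E_1(z,x_1)}{E_2(z,x_2)}, \quad z\in\C^+,
\end{align*}
is of bounded type in the upper complex half-plane.

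To verify this I would invoke the theorem of Kre\u{\i}n recalled above: an entire function belongs to the Cartwright class $\mathcal{C}$ if and only if it is of bounded type in both the upper and the lower half-plane. By assumption $E_1(\cdot,x_1),E_2(\cdot,x_2)\in\mathcal{C}$, so in particular each of them is of bounded type in $\C^+$, say $E_1(\cdot,x_1)=f_1/g_1$ and $E_2(\cdot,x_2)=f_2/g_2$ with $f_i$, $g_i$ bounded analytic on $\C^+$. Since $E_2(\cdot,x_2)$ is a de Branges function it has no zeros in $\C^+$, and it was already observed that it has no real zeros either; hence $E_1(\cdot,x_1)/E_2(\cdot,x_2)$ is analytic on $\C^+$ and can be written as $(f_1 g_2)/(g_1 f_2)$, a quotient of two bounded analytic functions, so it is of bounded type there.

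With $g\equiv 0$ both hypotheses of Theorem~\ref{thmdBuniqS} are now met, and its conclusion is exactly that $H_1$ equals $H_2$ up to some shift. I do not expect any real obstacle: the entire content of the argument is the passage from the Cartwright-class growth bound to the bounded-type property of the quotient, which is precisely what Kre\u{\i}n's theorem supplies, together with the elementary fact that bounded type is preserved under quotients; everything else is a direct appeal to Theorem~\ref{thmdBuniqS}.
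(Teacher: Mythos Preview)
Your proposal is correct and follows exactly the approach the paper uses: invoke Kre\u{\i}n's theorem to pass from the Cartwright class to bounded type in $\C^+$, observe that the quotient of two bounded-type functions is again of bounded type, and apply Theorem~\ref{thmdBuniqS} with $g\equiv 0$. Your additional remark that $E_2(\cdot,x_2)$ has no zeros in $\C^+$ (ensuring the quotient is analytic there) is a nice bit of care that the paper leaves implicit.
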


 Again, as in Corollary~\ref{cordBuniqnoshift}, if one takes finite fixed left endpoints, then the operator is uniquely determined by the spectral measure.
 In particular, as a special case one recovers the classical result due to Borg and Marchenko that the spectral measure (corresponding to a particular choice of $\phi$) uniquely determines the operator, if the left endpoint is regular.
 Corollary~\ref{cordBuniqScor} shows that this continues to hold if the left endpoint is only assumed to be in the limit-circle case. 
 In fact, due to a result of Krein \cite{Kr52} it is still possible to choose $\phi$ such that the corresponding de Branges functions belong to the Cartwright class in this case. 

 However, our result is also applicable in situations where the left endpoint is also allowed to be in the limit-point case, as we will show in the next section. There we will apply our results in order to obtain an inverse uniqueness theorem for perturbed spherical Schr\"odinger operators.

\section{Perturbed spherical Schr\"odinger operators}

In this section we consider differential expressions of the form 
\begin{align*}
\tau = - \frac{d^2}{dx^2} + \frac{l(l+1)}{x^2} + q(x)
\end{align*}
on some interval $(0,b)$, where $l\in[-\frac{1}{2},\infty)$ and $q$ is some real-valued, locally integrable function on $(0,b)$ such that the function
\begin{align}\label{eqndBBesselqbar}
 \overline{q}(x) = \begin{cases}
                |q(x)| x, & l>-\frac{1}{2}, \\
                |q(x)| x(1-\ln x), & l=-\frac{1}{2},
              \end{cases}
\end{align}
 is integrable near zero. According to~\cite[Theorem~2.4]{kst}, $\tau$ is in the limit-circle case at zero if and only if $l\in[-\frac{1}{2},\frac{1}{2})$.
  With $H$ we denote some associated self-adjoint operator with the boundary condition
\begin{align}\label{eqndBBesselBC}
  \lim_{x\rightarrow 0} x^l ((l+1)f(x) - xf'(x)) = 0
\end{align}
 at zero, if necessary.
 In~\cite[Lemma~2.2]{kst} it has been shown that under these assumptions, there is a nontrivial real entire solution $\phi$ of exponential order one half which lies in $L^2(0,b)$ near zero and satisfies the boundary condition~\eqref{eqndBBesselBC} there, if any.
 Note that this solution is unique up to scalar multiples because of the growth restriction, as noted in~\cite[Lemma~6.4]{kst2}. 
 Consequently, the spectral measure $\rho$ associated with this solution $\phi$ is also unique up to a scalar multiple.

In order to state our inverse uniqueness theorem, let $l_1$, $l_2\in[-\frac{1}{2},\infty)$ and $q_1$, $q_2$ be two potentials on intervals $(0,b_1)$ respectively $(0,b_2)$, such that
 the functions $\overline{q}_1$, $\overline{q}_2$ defined as in~\eqref{eqndBBesselqbar} are integrable near zero.
 Furthermore, let $H_1$, $H_2$ be two corresponding self-adjoint operators with the boundary condition~\eqref{eqndBBesselBC} at zero, if necessary.
 With $\phi_1$, $\phi_2$ we denote some real entire solutions of exponential order one half which are square integrable near zero and satisfy the boundary condition there, if any.
 If $\rho_1$, $\rho_2$ are the corresponding spectral measures, then the uniqueness results from the preceding section yield the following theorem.

\begin{theorem}
If $\rho_1=\rho_2$, then $l_1=l_2$, $b_1=b_2$, $q_1=q_2$ and $H_1=H_2$.
\end{theorem}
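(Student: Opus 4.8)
The plan is to verify the hypotheses of Corollary~\ref{cordBuniqnoshift} in this Bessel setting and then to extract $l_1=l_2$ and $q_1=q_2$ separately from the resulting equality of the \emph{full} potentials, using the normalization~\eqref{eqndBBesselqbar}.

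First I would check that the machinery of Sections~3--4 applies. On the open interval $(0,b_j)$ the relevant potential in the sense of those sections is the full expression $l_j(l_j+1)/x^2+q_j$, which is locally integrable there since $x\mapsto x^{-2}$ is bounded on compact subsets of $(0,b_j)$ and $q_j$ is locally integrable by hypothesis. By hypothesis $\phi_1,\phi_2$ are nontrivial real entire solutions which lie in $L^2$ near $0$ and satisfy the boundary condition~\eqref{eqndBBesselBC} at $0$ whenever $\tau_j$ is in the limit-circle case there (i.e.\ $l_j\in[-\tfrac12,\tfrac12)$); these are precisely the standing assumptions of Section~3, so the de Branges functions $E_j(z,x_j)=\phi_j(z,x_j)+\I\phi_j'(z,x_j)$ and the associated spaces are defined. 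Since $\phi_j$, and hence also $\phi_j'(\cdot,x_j)$, is of exponential order $\nicefrac{1}{2}$ by \cite[Lemma~2.2]{kst}, the entire function $E_j(\cdot,x_j)$ is of exponential order at most $\nicefrac{1}{2}<1$ and therefore lies in the Cartwright class $\mathcal{C}$. By Kre\u{\i}n's theorem $E_j(\cdot,x_j)$ is then of bounded type in $\C^+$, and so is the quotient $E_1(\cdot,x_1)/E_2(\cdot,x_2)$; thus~\eqref{eqnquotE1E2} holds with $g\equiv0$.

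Since moreover $-\infty<a_1=a_2=0$ and $\rho_1=\rho_2$, Corollary~\ref{cordBuniqnoshift} now yields $b_1=b_2$, the identity $H_1=H_2$, and
\begin{align*}
 \frac{l_1(l_1+1)}{x^2}+q_1(x)=\frac{l_2(l_2+1)}{x^2}+q_2(x)\qquad\text{for a.e. }x\in(0,b_1).
\end{align*}
It remains to split this into $l_1=l_2$ and $q_1=q_2$. Writing the identity as $(l_1(l_1+1)-l_2(l_2+1))\,x^{-2}=q_2(x)-q_1(x)$ and multiplying by $x$, the right-hand side $x\,(q_2(x)-q_1(x))$ is integrable near $0$: in each case the assumption that $\overline{q}_j$ is integrable near $0$ gives $x|q_j(x)|$ integrable near $0$ (directly when $l_j>-\tfrac12$; from $x(1-\ln x)|q_j(x)|\ge x|q_j(x)|$ for small $x$ when $l_j=-\tfrac12$). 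The left-hand side is $c/x$ with $c=l_1(l_1+1)-l_2(l_2+1)$, and $c/x$ fails to be integrable near $0$ unless $c=0$. Hence $l_1(l_1+1)=l_2(l_2+1)$; since $t\mapsto t(t+1)$ is strictly increasing on $[-\tfrac12,\infty)$, it is injective there, so $l_1=l_2$ and then $q_1=q_2$ almost everywhere.

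The only step with genuine content beyond citing the earlier results is this last disentangling, and it is also the one I would be most careful about: its whole point is the elementary observation that $1/x\notin L^1$ near $0$, against which the normalization~\eqref{eqndBBesselqbar} is exactly calibrated so that $xq_j\in L^1$ near $0$ (the logarithmic weight for $l=-\tfrac12$ is not needed for this argument). A minor point that I would pin down by reference rather than by computation is that $\phi_j'(\cdot,x_j)$ inherits the exponential order $\nicefrac{1}{2}$ of $\phi_j(\cdot,x_j)$, which is what places $E_j(\cdot,x_j)$ in the Cartwright class; this is part of the content of \cite[Lemma~2.2]{kst}.
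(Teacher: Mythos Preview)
Your proposal is correct and follows essentially the same approach as the paper: use that $\phi_j$ (and hence $E_j(\cdot,x_j)$) has exponential order $\nicefrac{1}{2}$ to place $E_j(\cdot,x_j)$ in the Cartwright class, invoke the uniqueness results of Section~\ref{secdBuniq} with the common finite left endpoint $a_1=a_2=0$ to get $b_1=b_2$, $H_1=H_2$ and equality of the full potentials, and then separate $l_1=l_2$ from $q_1=q_2$ via the integrability of $\overline{q}_j$ near zero together with the injectivity of $t\mapsto t(t+1)$ on $[-\tfrac12,\infty)$. The only cosmetic difference is that the paper cites Corollary~\ref{cordBuniqScor} (plus the observation about fixed finite left endpoints) whereas you invoke Corollary~\ref{cordBuniqnoshift} after verifying the bounded-type condition via Kre\u{\i}n's theorem; these amount to the same thing.
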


\begin{proof}
 Since the solutions are of exponential order one half, we may immediately apply Corollary~\ref{cordBuniqScor} and obtain $b_1=b_2$, 
 \begin{align*}
  \frac{l_1(l_1+1)}{x^2} + q_1(x) = \frac{l_2(l_2+1)}{x^2} + q_2(x)
 \end{align*}
  for almost all $x\in(0,b_1)$ and $H_1=H_2$.
  Now since the function
  \begin{align*}
   xq_1(x) - xq_2(x) = \frac{l_2(l_2+1) - l_1(l_1+1)}{x}, \quad x\in(0,b_1)
  \end{align*}
  is integrable near zero, we infer $l_1(l_1+1)=l_2(l_2+1)$ and hence $l_1=l_2$.
\end{proof}

\bigskip
\noindent
{\bf Acknowledgments.}
I thank Fritz Gesztesy, Gerald Teschl and Harald Woracek for helpful discussions and hints with respect to the literature.


\begin{thebibliography}{XX} 

\bibitem{albniz}
S.\ Albeverio and L.\ Nizhnik, {\em A Schr\"odinger operator with a $\delta'$-interaction on a Cantor set and Krein--Feller operators}, Math.\ Nachr.\ {\bf 279} (2006), no.~5-6, 467--476.

\bibitem{albniz2}
S.\ Albeverio and L.\ Nizhnik, {\em Schr\"odinger operators with nonlocal point interactions}, J.\ Math.\ Anal.\ Appl.\ {\bf 332} (2007), no.~2, 884--895.

\bibitem{bamrem}
A.\ Ben Amor and C.\ Remling, {\em Direct and inverse spectral theory of one-dimensional Schr\"odinger operators with measures}, Integral Equations Operator Theory {\bf 52} (2005), no.~3, 395--417.

\bibitem{ben1}
C.\ Bennewitz, {\em A Paley--Wiener theorem with applications to inverse spectral theory}, in {\em Advances in differential equations and mathematical physics}, 21--31, Contemp.\ Math. {\bf 327}, Amer.\ Math.\ Soc., Providence, RI, 2003.

\bibitem{ben}
C.\ Bennewitz, {\em A proof of the local Borg--Marchenko theorem},
Comm.\ Math.\ Phys.\ {\bf 218} (2001), no.~1, 131--132.

\bibitem{ben2}
C.\ Bennewitz, {\em Spectral asymptotics for Sturm--Liouville equations}, Proc.\ London Math.\ Soc.\ (3) {\bf 59} (1989), no.~2, 294--338.

\bibitem{beneve}
C.\ Bennewitz and W.\ N.\ Everitt, {\em The Titchmarsh--Weyl eigenfunction expansion theorem for Sturm--Liouville differential equations}, in {\em Sturm--Liouville Theory: Past and Present}, 137--171, Birkh\"{a}user, Basel, 2005.

\bibitem{braniz}
J.\ Brasche and L.\ Nizhnik, {\em One-dimensional Schr\"odinger operators with $\delta'$-interactions on a set of Lebesgue measure zero}, Oper.\ Matrices {\bf 7} (2013), no.~4, 887--904.

\bibitem{dBbook}
L.\ de Branges, {\em Hilbert spaces of entire functions}, Prentice-Hall, Inc., Englewood Cliffs, N.J.\ 1968.

\bibitem{LeftDefiniteSL}
J.\ Eckhardt, {\em Direct and inverse spectral theory of singular left-definite Sturm--Liouville operators}, J.\ Differential Equations {\bf 253} (2012), no.~2, 604--634.

\bibitem{measureSL}
J.\ Eckhardt and G.\ Teschl, {\em Sturm--Liouville operators with measure-valued coefficients}, J.\ Anal.\ Math.\ {\bf 120} (2013), no.~1, 151--224.

\bibitem{SingWT2}
J.\ Eckhardt and G.\ Teschl, {\em Uniqueness results for one-dimensional Schr\"{o}dinger operators with purely discrete spectra}, Trans.\ Amer.\ Math.\ Soc.\ {\bf 365} (2013), no.~7, 3923--3942.

\bibitem{ful}
C.\ Fulton, {\em Parametrizations of Titchmarsh's $m(\lambda)$-functions in the limit circle case}, Trans.\ Amer.\ Math.\ Soc. {\bf 229} (1977), 51--63.

\bibitem{ful2}
C.\ Fulton, {\em Titchmarsh--Weyl $m$-functions for second-order Sturm--Liouville problems with two singular endpoints}, Math.\ Nachr.\ {\bf 281} (2008), no.~10, 1418--1475.

\bibitem{fullan}
C.\ Fulton and H.\ Langer, {\em Sturm--Liouville operators with singularities and generalized Nevanlinna functions}, Complex Anal.\ Oper.\ Theory {\bf 4} (2010), no.~2, 179--243.

\bibitem{geszin}
F.\ Gesztesy and M.\ Zinchenko, {\em On spectral theory for Schr\"{o}dinger operators with strongly singular potentials}, Math.\ Nachr.\ {\bf 279} (2006), no.~9-10, 1041--1082.

\bibitem{ka}
I.\ S.\ Kac, {\em The existence of spectral functions of generalized second order differential systems with boundary conditions at the singular end}, Amer.\ Math.\ Soc.\ Transl.\ (2) {\bf 62} (1967), 204--262.

\bibitem{ko}
K.\ Kodaira, {\em The eigenvalue problem for ordinary differential equations of the second order and Heisenberg's theory of $S$-matrices}, Amer.\ J.\ Math.\ {\bf 71} (1949), 921--945.

\bibitem{kt}
A.\ Kostenko and G.\ Teschl, {\em On the singular Weyl--Titchmarsh function of perturbed spherical Schr\"odinger operators}, J.\ Differential Equations {\bf 250} (2011), no.~9, 3701--3739.

\bibitem{kst}
A.\ Kostenko, A.\ Sakhnovich and G.\ Teschl, {\em Inverse eigenvalue problems for perturbed spherical Schr\"odinger operators},
Inverse Problems {\bf 26} (2010), no.~10, 105013, 14pp.

\bibitem{kst2}
A.\ Kostenko, A.\ Sakhnovich and G.\ Teschl, {\em Weyl--Titchmarsh theory for Schr\"odinger operators with strongly singular potentials},
Int.\ Math.\ Res.\ Not.\ {\bf 2012}, 1699--1747 (2012).

\bibitem{Kr52} M.\ G.\ Krein, {\it On the indeterminate case of the Sturm--Liouville boundary problem in the interval $(0,\infty)$}, (Russian) Izvestiya Akad.\ Nauk SSSR Ser.\ Mat.\ {\bf 16}, (1952), 293--324.

\bibitem{kurlug}
P.\ Kurasov and A.\ Luger, {\em An operator theoretic interpretation of the generalized Titchmarsh--Weyl coefficient for a singular Sturm--Liouville problem}, Math.\ Phys.\ Anal.\ Geom.\ {\bf 14} (2011), no.~2, 115--151.

\bibitem{lev}
B.\ Ya.\ Levin, {\em Lectures on Entire Functions}, Transl.\ Math.\ Mon.\ {\bf 150}, Amer.\ Math.\ Soc., Providence, RI, 1996.

\bibitem{levitan}
B.\ M.\ Levitan, {\em Inverse Sturm--Liouville problems}, VNU Science Press, Utrecht, 1987.

\bibitem{poestrub}
J.\ P\"{o}schel and E.\ Trubowitz, {\em Inverse spectral theory}, Pure Appl.\ Math., 130, Academic Press, Inc., Boston, MA, 1987.

\bibitem{remling}
C.\ Remling, {\em Schr\"{o}dinger operators and de Branges spaces}, J.\ Funct.\ Anal.\ {\bf 196} (2002), no.~2, 323--394.

\bibitem{remling2}
C.\ Remling, {\em Inverse spectral theory for one-dimensional Schr\"{o}dinger operators: the $A$ function}, Math.\ Z.\ {\bf 245} (2003), no.~3, 597--617.

\bibitem{rosrov}
M.\ Rosenblum and J.\ Rovnyak, {\em Topics in Hardy classes and univalent functions}, Birkh\"{a}user Verlag, Basel, 1994.

\bibitem{tschroe}
G.\ Teschl, {\em Mathematical Methods in Quantum Mechanics; With Applications to Schr\"odinger Operators}, Graduate Studies in Mathematics {\bf 99}, Amer.\ Math.\ Soc., Providence, RI, 2009.

\end{thebibliography}
\end{document}